\documentclass{article}
\usepackage{spconf,amsmath,graphicx}
\usepackage[hidelinks]{hyperref}
\usepackage{amsthm,amssymb}
\usepackage{xcolor}
\usepackage{wrapfig}
\usepackage{algpseudocode}
\usepackage{algorithm}
\usepackage{cleveref}
\usepackage{enumerate}
\usepackage{tcolorbox}
\usepackage{float}
\usepackage{comment}
\usepackage{tikz}
\usepackage{atbegshi}

\newcommand\crnotice{%
\AtBeginShipoutFirst{%
\begin{tikzpicture}[remember picture,overlay] \node[anchor=south,yshift=10pt] at (current page.south) {\fbox{\parbox{\dimexpr\textwidth-\fboxsep-\fboxrule\relax}{ 
\footnotesize \textcopyright Copyright 2026 IEEE. Published in \textit{ICASSP 2026 - 2026 IEEE International Conference on Acoustics, Speech and Signal Processing (ICASSP),} scheduled for 3-8 May 2026 in Barcelona, Spain. Personal use of this material is permitted. However, permission to reprint/republish this material for advertising or promotional purposes or for creating new collective works for resale or redistribution to servers or lists, or to reuse any copyrighted component of this work in other works, must be obtained from the IEEE. Contact: Manager, Copyrights and Permissions / IEEE Service Center / 445 Hoes Lane / P.O. Box 1331 / Piscataway, NJ 08855-1331, USA. Telephone: + Intl. 908-562-3966.}}}; 
\end{tikzpicture} }}

\newtheorem{thm}{Theorem}[section]

\newtheorem{assumption}{Assumption}

\newtheorem{lemma}[thm]{Lemma}
\newcommand{\cO}{\mathcal{O}}

\def\WL{\mathsf{WL}}
\def\BL{\mathsf{BL}}
\def\thr{\texttt{thr}}
\def\R{\mathbb{R}}

\title{Quantile Randomized Kaczmarz algorithm with Whitelist Trust Mechanism}

\name{Sofiia Shvaiko$^{1}$, Longxiu Huang$^{2}$, Elizaveta Rebrova$^{1}$\thanks{This work was partially supported by NSF DMS-2309685 (ER and SS) and by startup funding from Michigan State University (LH).}}
\address{$^{1}$Princeton University \quad $^{2}$Michigan State University}

\begin{document}

\maketitle
\crnotice
\begin{abstract}
 Randomized Kaczmarz (RK) is a simple and fast solver for consistent overdetermined systems, but it is known to be fragile under noise.  
We study overdetermined $m\times n$ linear systems with a sparse set of corrupted equations,
$
{\bf A}{\bf x}^\star = {\bf b},
$
where only \(\tilde{\bf b} = {\bf b} + \boldsymbol{\varepsilon}\) is observed with \(\|\boldsymbol{\varepsilon}\|_0 \le \beta m\).  
The recently introduced QuantileRK (QRK) algorithm addresses this issue by testing residuals against a quantile threshold, but computing a per-iteration quantile across many rows is costly.
In this work we  
(i) reanalyze QRK and show that its convergence rate improves monotonically as the corruption fraction \(\beta\) decreases;  
(ii) propose a simple online detector that flags and removes unreliable rows, which reduces the effective \(\beta\) and speeds up convergence; and  
(iii) make the method practical by estimating quantiles from a small random subsample of rows, preserving robustness while lowering the per-iteration cost.
Simulations on imaging and synthetic data demonstrate the efficiency of the proposed method.
\end{abstract}
\begin{keywords}
 Robust solvers,  randomized Kaczmarz method, stochastic iterative methods, corrupted linear solvers, filtering 
\end{keywords}
\section{Introduction}
\label{sec:intro}
The Kaczmarz method is an iterative technique for solving linear systems that was first introduced in 1937 \cite{karczmarz1937angenaherte}. In computed tomography, it is also referred to as the Algebraic Reconstruction Technique (ART) \cite{gordon1975image,herman1993algebraic,natterer2001mathematics}. The method enjoys a broad range of applications, from tomography to digital signal processing. A major modern development is the \emph{randomized} Kaczmarz algorithm (RK) due to Strohmer and Vershynin \cite{strohmer2009randomized}, which under norm-proportional row sampling achieves an exponential (linear) bound on the expected convergence rate for consistent systems. Real data, however, are rarely perfectly consistent: measurement noise, model mismatch, and even gross corruptions lead to ${\bf A}{\bf x}={\bf b}$ being only approximately solvable. In this regime, classical analyses show that RK converges to a noise-dependent accuracy floor \cite{needell2010randomized,ma2015convergence}, while some variants (e.g., \cite{zouzias2013randomized}) of the method converge to the least squares solution.

Beyond purely random sampling, \emph{adaptive} selection strategies that prioritize informative measurements have also been investigated. “Greedy” rules choose, at each step, a row with a large residual, which improves practical performance while admitting provable convergence guarantees \cite{bai2021,haddock2021greed,haddock2019motzkin}. A complementary line of work develops \emph{quantile-based} randomized Kaczmarz (QRK): rows are sampled, but updates are accepted only when the residual falls below a data-driven quantile threshold, yielding robustness to sparse but large corruptions \cite{steinerberger2023quantile,haddock2020quantile}. Subsampled QRK further reduces per-iteration cost by estimating the quantile from a random residual subset while preserving convergence behavior \cite{haddock2023subsampled,cai2025subsample}. Blocked \cite{cheng2022block}, streaming \cite{jeong2025stochastic}, and time-varying perturbation models have also been analyzed \cite{coria2024quantile}.

Inspired by adversarially distributed RK---which earns trust via simple aggregation statistics and accurately identifies adversarial workers \cite{huang2024randomized}---we apply the same ``earned-trust'' idea to improve the quality of the equations being sampled by QRK method by progressively concentrating updates on a cleaner subset. Concretely, we study the corrupted model 
where a $\beta$-fraction of the observed labels can be arbitrarily corrupted,
and introduce \emph{WhiteList QuantileRK} (WL-QRK) scheme.

On a high level, each iteration samples a mini-batch from the currently whitelisted equations, forms a quantile threshold $q$, and accepts updates only when the chosen row’s residual is below this threshold. Rows whose residuals repeatedly exceed a higher blocking threshold accumulate votes; every $S$ steps, the most suspicious rows are moved to a blocklist and removed from active sampling. Importantly, blocked rows are not discarded permanently: if their residuals later fall below the quantile threshold, they can be returned to the whitelist. 

We provide theoretical support for WL-QRK’s effectiveness: Theorem~\ref{thm:QRKrate} formalizes that QRK converges faster as the corruption rate decreases, while Lemma~\ref{lemma} justifies residual-based screening: after a transient phase, persistently large residuals concentrate on corrupted equations, making residual magnitude a reliable elimination signal.  
       The whitelist/blocklist mechanism is lightweight and adaptive: it reuses QRK’s residuals to (i) detect when residual statistics stabilize and (ii) improve performance by removing  equations with persistently large residuals. 

In summary, our contributions include:
\begin{enumerate}[(i)]
\item  \textbf{Refined QRK analysis.} We establish a refined convergence guarantee for QuantileRK (QRK), showing that the rate constant improves \emph{monotonically} as the corruption fraction $\beta$ decreases.  
\item \textbf{Online row screening.} We introduce a lightweight, per-iteration detector that flags and removes unreliable rows, thereby reducing the \emph{effective} corruption level and accelerating convergence.  
\item \textbf{Subsampled quantiles for speed.} We estimate the residual quantile from a small random subset of rows, preserving QRK’s robustness while cutting per-iteration work from $\cO(m)$ to $\cO(t)$ with $t\ll m$.  
\item \textbf{Empirical validation.} On synthetic data and on real-world tomography data contaminated by  substantial fraction of corruptions, the proposed pipeline consistently outperforms RK and QRK baselines in both speed and accuracy, corroborating the theory. 
\end{enumerate}

\section{Notation, preliminaries and setup} \label{sec:background}
\noindent\textbf{Notation.}
For ${\bf A}\in\mathbb{R}^{m\times n}$ we write ${\bf a}_i$ for its $i$-th row and assume without loss of generality
\begin{equation}\label{eq:row-norm}
  \|{\bf a}_i\|_2=1 \quad \text{for all } i\in [m]:=\{1,\dots,m\}
\end{equation}
(after standard row normalization, scaling the corresponding $b_i$).
Given a vector ${\bf r}\in\mathbb{R}^t$ and $q\in(0,1)$, $\textnormal{Quantile}({\bf r},q)$ denotes the (lower) $q$-quantile of $\{r_\ell\}_{\ell=1}^t$.
We use $\cO(\cdot)$ for order notation. 

\noindent\textbf{Problem model.}
Let ${\bf A}$ be an $m \times n$ measurement matrix and $ \tilde{\bf b}$ be observed labels,
${\bf x}^\star\in\mathbb{R}^n$ and a sparse corruption vector $\boldsymbol{\varepsilon}\in\mathbb{R}^m$ with $\|\boldsymbol{\varepsilon}\|_0\le \beta m$, $\beta\in(0,1)$, such that
\begin{equation}\label{eq:model}
  {\bf A}{\bf x}^\star={\bf b},\qquad \tilde {\bf b} = {\bf b} + \boldsymbol{\varepsilon}.
\end{equation}
The algorithm only observes $({\bf A},\tilde {\bf b})$; the goal is to recover ${\bf x}^\star$, or its high-accuracy approximation.
Throughout, we call a \emph{residual} at ${\bf x}$ the quantity $r_i(x):=\langle {\bf a}_i,{\bf x}\rangle-\tilde b_i$.

\noindent\textbf{Baseline RK. \cite{strohmer2007randomized}}
With rows normalized as in \eqref{eq:row-norm}, a Kaczmarz update using index $k\in[m]$ is the orthogonal projection
\begin{equation}\label{eq:rk-update}
  {\bf x}^{+}={\bf x} -r_k({\bf x})\,{\bf a}_k .
\end{equation}
Classical RK samples $k$ uniformly  at each step (or, proportionally to $\|{\bf a}_k\|_2^2$ if the system is not row-normalized). On inconsistent systems, RK converges to a horizon determined by the noise intensity and oscillates in that region.

\noindent\textbf{QuantileRK (QRK). \cite{haddock2019randomized}}
QRK introduces an acceptance test: at iterate ${\bf x}$, form a batch
$\{i_\ell\}_{\ell=1}^t\subseteq[m]$, compute residuals $|r_{i_\ell}(x)|$, set a threshold
\[
  \tau_q(x):=\textnormal{Quantile}(\big(\{|r_{i_\ell}(x)|\}_{\ell=1}^t,q\big),
\]
and perform the update \eqref{eq:rk-update} only with an index $k$ taken from the \emph{admissible set}
\begin{equation}\label{eq:admissible}
  R_q(x):=\Big\{\,i_\ell:\ |r_{i_\ell}(x)|\le \tau_q(x)\,\Big\}.
\end{equation}
Intuitively, rows whose residuals lie above the $q$-quantile are likely corrupted and are skipped.
Using $t\ll m$ makes the per-iteration cost $\cO(n)+\cO(t)$ (selection in linear time), rather than $\cO(m)$. The full procedure is summarized in Algorithm~\ref{alg:QRK}.

\vspace{-5pt}

 \begin{algorithm}
\caption{Quantile Randomized Kaczmarz (QRK)}

\label{alg:QRK}
\begin{algorithmic}[1]
\Require Matrix ${\bf A}\in\mathbb{R}^{m\times n}$, vector ${\bf b}\in\mathbb{R}^m$, quantile $q\in(0,1)$, iterations $N$, batch size $t$, initial guess ${\bf x}_0$ (default $0$); index pool $\mathcal{I}\subseteq[m]$ (default $\mathcal{I}=[m]$)
\Ensure Approximate solution ${\bf x}_N$
\State ${\bf x} \gets {\bf x}_0$
\For{$j=1,\dots,N$}
  \State  {Sample batch:} draw $i_1,\dots,i_t \sim \mathrm{Uniform}(\mathcal{I})$
  \State  {Residuals on batch:} $r_\ell \gets \langle {\bf a}_{i_\ell},{\bf x}\rangle - b_{i_\ell}$, for $\ell\in[t]$
  \State  {Quantile threshold:} $\tau \gets \mathrm{Quantile}\big(\{|r_\ell|\}_{\ell=1}^t,\, q\big)$
  \State  {Candidate set:} $R \gets \{\, i_\ell : |r_\ell| \le \tau \,\}$
  \State  {Pick an index:} $k \sim \mathrm{Uniform}(R)$ 
  \State  {Kaczmarz step:} ${\bf x} \gets {\bf x} - \big(\langle {\bf a}_k,{\bf x}\rangle - b_k\big)\, {\bf a}_k$
\EndFor
\State \Return ${\bf x} = {\bf x}_N$
\end{algorithmic}
\end{algorithm}

 It has been shown (\cite[Theorem~1.1]{haddock2020quantile}) that QRK converges to ${\bf x}^*$ at the exponential expected convergence rate. We note that neither standard least squares solvers nor standard RK algorithm produce a meaningful solution close to ${\bf x}^*$ in the setting with large corruptions in $\boldsymbol{\varepsilon}$.

\section{Algorithm development}
\label{sec:alg}

We now turn to the theoretical foundations of our approach.  

\begin{assumption}
\label{incoherent}
All the rows $\va_i$ are independent and $\sqrt{n}\va_i$ are mean zero isotropic with uniformly bounded subgaussian norm $\|\sqrt{n}\va_i\|_{\psi_2} \leq K.$ Each entry $a_{ij}$ of $\mA$ has probability density function $\phi_{ij}$ which satisfies $\phi_{ij}(t) \leq D \sqrt{n}$ for all $t\in \R.$ 
\end{assumption}

\begin{thm}\label{thm:QRKrate}
Under the Assumption \ref{incoherent}, with $q=1 - \beta - \alpha$, $\alpha, \beta \in (0, 1)$, \Cref{alg:QRK}  on the full residual has a (detailed) convergence rate 
\begin{align}\label{eq:main}\mathbb{E}&\|{\bf x}_j - {\bf x}^\star\|_2^2 \le \bigg(1 - \frac{C_D (1 - 3\beta - \alpha)^3}{n (1 - 2\beta - \alpha)} \bigg)^j ||{\bf x}_0 - {\bf x}^\star||_2^2 \nonumber\\
&\text{ if } \; \frac{m}{n} > \frac{C_1}{1 - 3\beta - \alpha}\log\frac{DK}{1-3\beta - \alpha}; \nonumber\\ &\quad \frac{4C_K^2\sqrt{\beta}}{\alpha} + \frac{2C_K^2 \beta}{\alpha^2} < \frac{C_D (1 - 3\beta - \alpha)^4}{(1 - 2\beta - \alpha)}.\nonumber
\end{align}
\end{thm}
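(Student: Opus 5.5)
We follow the one-step analysis of \cite[Theorem~1.1]{haddock2020quantile}, tracking the dependence on $\beta$ and $\alpha$ explicitly instead of absorbing it into constants. Fix an iterate ${\bf x}={\bf x}_j$ and write ${\bf e}={\bf x}-{\bf x}^\star$. Let $\mathcal{C}$ be the corrupted rows ($|\mathcal{C}|\le\beta m$) and $\mathcal{U}=[m]\setminus\mathcal{C}$. With the full residual, the threshold $\tau=\tau_q({\bf x})$ is the $q$-quantile of all $m$ residuals, and the admissible set $R$ has about $qm=(1-\beta-\alpha)m$ rows. We split $R=(R\cap\mathcal{U})\cup(R\cap\mathcal{C})$ and compute $\mathbb{E}\big[\|{\bf x}^+-{\bf x}^\star\|_2^2\,\big|\,{\bf x}\big]$ separately on each part.

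\emph{Uncorrupted admissible rows.} For $k\in R\cap\mathcal{U}$ the step is an exact projection, so $\|{\bf e}^+\|^2=\|{\bf e}\|^2-\langle{\bf a}_k,{\bf e}\rangle^2$. We need a lower bound
\[
\textstyle\sum_{k\in R\cap\mathcal{U}}\langle{\bf a}_k,{\bf e}\rangle^2 \gtrsim |R|\,\tfrac{C_D(1-3\beta-\alpha)^3}{n}\|{\bf e}\|^2 ,
\]
where $|R\cap\mathcal{U}|\ge(1-2\beta-\alpha)m$. We use two facts about the matrix, both of which hold with high probability under Assumption~\ref{incoherent}. First, every row subset of size $\gamma m$ has smallest singular value squared $\ge C_D\gamma^3 m/n$. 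This follows from the density bound $D\sqrt n$ (small-ball estimate for $\langle\sqrt n{\bf a}_i,{\bf u}\rangle$), an $\epsilon$-net, and a union bound over subsets. That union bound is where the aspect-ratio condition $m/n>\frac{C_1}{\gamma}\log\frac{DK}{\gamma}$ enters, with $\gamma=1-3\beta-\alpha$. Second, every subset of size $\beta m$ has $\sigma_{\max}^2\le C_K^2\beta m/n$, by subgaussian concentration.

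The bad rows are those uncorrupted rows lying above the threshold, or those that are poorly conditioned. Discarding at most $\beta m$ of them from $R\cap\mathcal{U}$ leaves $\gamma m$ rows, and the restricted singular value bound applies to these. Normalizing by $|R|\approx(1-\beta-\alpha)m$, or by $(1-2\beta-\alpha)$ after a careful count, produces the denominator in the stated rate.

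\emph{Corrupted admissible rows.} For $k\in R\cap\mathcal{C}$ we use $|r_k|\le\tau$ and write
\[
\|{\bf e}^+\|^2\le\|{\bf e}\|^2+2|r_k||\langle{\bf a}_k,{\bf e}\rangle|+r_k^2 .
\]
At least $\alpha m$ uncorrupted rows have residual $\ge\tau$, and for these rows the residual equals $\langle{\bf a}_i,{\bf e}\rangle$. Hence
\[
\tau^2\le\frac{1}{\alpha m}\sum_{i\in\mathcal{U}}\langle{\bf a}_i,{\bf e}\rangle^2\le\frac{C_K^2}{\alpha n}\|{\bf e}\|^2 .
\]
Summing over the at most $\beta m$ corrupted admissible rows, and bounding $\sum_{k\in\mathcal{C}}|\langle{\bf a}_k,{\bf e}\rangle|$ by Cauchy--Schwarz with the $\beta$-subset $\sigma_{\max}$ bound, we get a total increase of order
\[
|R|^{-1}\Big(\tfrac{2C_K^2\sqrt\beta}{\alpha}+\tfrac{C_K^2\beta}{\alpha}\Big)\tfrac{m}{n}\|{\bf e}\|^2 .
\]
The constants must be massaged to reach the stated $\tfrac{4C_K^2\sqrt\beta}{\alpha}+\tfrac{2C_K^2\beta}{\alpha^2}$, using $\tau^2\le C_K^2\|{\bf e}\|^2/(\alpha n)$ twice.

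Combining the two parts, the one-step contraction factor is
\[
1-\frac{1}{n(1-2\beta-\alpha)}\Big[C_D\gamma^4-\Big(\tfrac{4C_K^2\sqrt\beta}{\alpha}+\tfrac{2C_K^2\beta}{\alpha^2}\Big)\Big]\cdot(\text{normalization}),
\]
and the second displayed hypothesis makes the bracket positive. We then absorb constants to reach the stated factor $1-\frac{C_D\gamma^3}{n(1-2\beta-\alpha)}$, possibly with $C_D$ halved. Taking total expectation and iterating gives \eqref{eq:main}.

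The main obstacle is the restricted smallest-singular-value bound with the explicit $\gamma^3$ dependence uniformly over all $\binom{m}{\gamma m}$ subsets. We first try the approach of \cite{haddock2020quantile}: a small-ball bound from the density assumption, giving that a fraction $\gtrsim\gamma$ of the rows satisfy $|\langle\sqrt n{\bf a}_i,{\bf u}\rangle|\gtrsim\gamma/D$. This is combined with a Chernoff bound per net point and a net of size $(CDK/\gamma)^n$, which accounts for the $\log(DK/\gamma)$ in the sample-size requirement. The exact exponents and normalization constants are the step most likely to need adjusting.
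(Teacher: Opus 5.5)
Your skeleton matches what the paper intends: the paper only says the proof follows Haddock et al.'s Theorem~1.1. However, two steps as written do not hold.

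\emph{First}, the corrupted-row estimate uses a false lemma. A uniform bound $\sigma_{\max}^2({\bf A}_S)\le C_K^2\beta m/n$ over all $|S|=\beta m$ cannot hold. With unit-norm rows, every nonempty $S$ has $\sigma_{\max}({\bf A}_S)\ge\|{\bf A}_S{\bf a}_i\|\ge 1$ for $i\in S$, which violates the bound whenever $\beta m<n/C_K^2$. Also, for Gaussian rows the top $\beta m$ values of $\langle{\bf a}_i,{\bf u}\rangle^2$ sum to order $\frac{\beta m}{n}\log\frac{1}{\beta}$. Your displayed increment does not follow from this lemma anyway: it would produce a $\beta/\sqrt{\alpha}$ term, not $\sqrt{\beta}/\alpha$.

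Use the global bound instead. The estimate $\sum_{k\in R\cap\mathcal{C}}|\langle{\bf a}_k,{\bf e}\rangle|\le\sqrt{\beta m}\,\|{\bf A}\|\,\|{\bf e}\|$ with $\|{\bf A}\|^2\le C_K^2m/n$, together with your $\tau^2\le C_K^2\|{\bf e}\|^2/(\alpha n)$, bounds the increase by $\frac{C_K^2}{n(1-\beta-\alpha)}\big(2\sqrt{\beta/\alpha}+\beta/\alpha\big)\|{\bf e}\|^2$. Since $\alpha<1$, this bracket is at most $\frac12\big(\frac{4\sqrt\beta}{\alpha}+\frac{2\beta}{\alpha^2}\big)$. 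The stated hypothesis is simply stronger than needed, so no massaging is required.

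A related fix in your small-ball/net sketch: at each net point you need \emph{all but} a $\gamma/2$-fraction of rows to satisfy $|\langle\sqrt n{\bf a}_i,{\bf u}\rangle|\gtrsim\gamma/D$, via Chernoff on the number of bad rows. That covers every $\gamma m$-subset at once. ``A fraction $\gtrsim\gamma$'' would not suffice, and a union bound over $\binom{m}{\gamma m}$ subsets is unaffordable.

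\emph{Second}, the combination step, which is where the statement is actually proved, is not carried out. Your estimates give a main term with $C_D\gamma^3$, not $C_D\gamma^4$. The hypothesis has right side $C_D\gamma^4/(1-2\beta-\alpha)>C_D\gamma^4$, so it does not even make your bracket positive. Positivity alone would not give a rate of the stated form anyway. Nor does any ``careful count'' make $|R|=(1-2\beta-\alpha)m$: with the full residual, $|R|=(1-\beta-\alpha)m$.

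To close the argument:
\begin{enumerate}[(i)]
\item Set $\gamma'=1-2\beta-\alpha\ge\gamma$ and apply the restricted bound to $R\cap\mathcal{U}$ itself. It has at least $\gamma'm$ rows, the lower bound passes to supersets so nothing needs to be discarded, and the aspect-ratio condition for $\gamma'$ follows from the one for $\gamma$. This gives a decrease of at least $\frac{C_D\gamma'^3}{n(1-\beta-\alpha)}\|{\bf e}\|^2$.
\item Because $\gamma^4/\gamma'\le\gamma^3\le\gamma'^3$, the hypothesis makes the increase at most half of this decrease.
\item With $a=1-\alpha$, the identity $(a-2\beta)^2-(a-\beta)(a-3\beta)=\beta^2$ and $a-2\beta\ge a-3\beta\ge 0$ give $\gamma'^4\ge(1-\beta-\alpha)\gamma^3$. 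So the one-step factor is at most $1-\frac{C_D\gamma^3}{2n(1-2\beta-\alpha)}$.
\item Renaming $C_D/2$ as $C_D$ only strengthens the hypothesis. Iterating with the tower property then gives the theorem.
\end{enumerate}
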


The proof follows similarly to \cite[Theorem~1.1]{haddock2020quantile}). The key implication of this result is that reducing the fraction of corrupted  equations $\beta$ yields provable acceleration (see Figure \ref{fig:beta}).
 We note that the conditions are satisfied when the matrix is sufficiently tall and $\beta$ is a small enough constant. 
 
\begin{wrapfigure}{r}{0.5\linewidth}
  \centering
  \vspace{-8pt}
  \includegraphics[width=\linewidth]{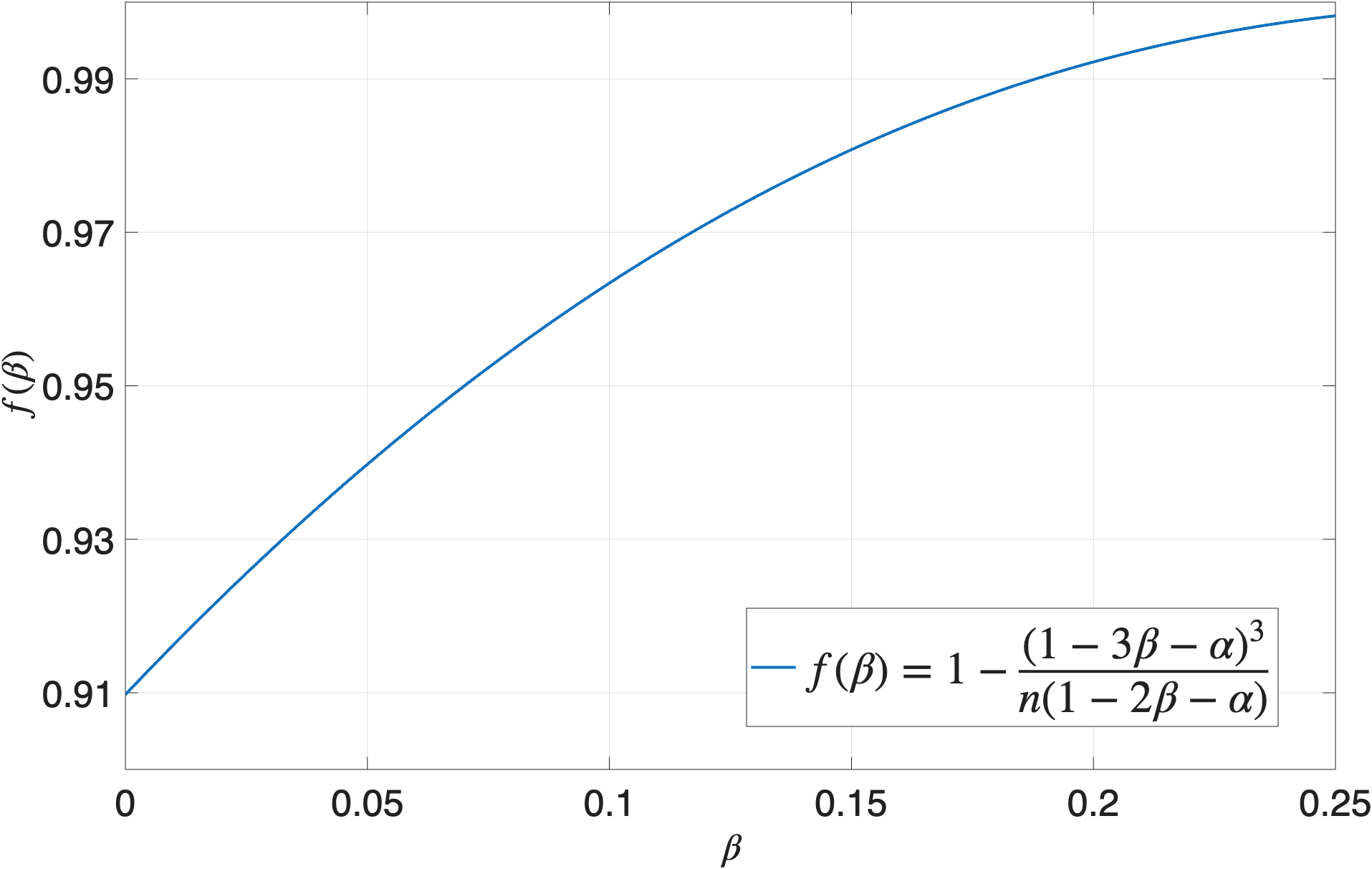}
   
  \caption{The convergence rate in Theorem~\ref{thm:QRKrate} improves as $\beta$ decreases (illustrated with $C_D = 1$, $n = 10$, $\alpha = 0.05$).}
   \vspace{-5pt}
\label{fig:beta}
  \vspace{-15pt}
\end{wrapfigure}
Further, we show that the observed residuals---information collected by the QRK algorithm on the fly---provide a consistent signal for identifying the corrupted equations, after a suitable warm-up period:
\begin{lemma}\label{lemma} Let $s \le \beta m$ (total number of corruptions in the system).  For any $\eta \in (0,1)$ there exists $N_1 = N_1(\eta)$ so that for $j \ge N_1$ the equations corresponding to the top $s$ residuals are all corrupted with probability at least $1 - \eta$.
 \end{lemma}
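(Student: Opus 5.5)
The plan is to reduce the statement to a deterministic separation property of the residuals, and then control the probability of that property with the expected convergence bound of Theorem~\ref{thm:QRKrate} and Markov's inequality. Throughout, I read $s$ as the number of genuinely corrupted equations. If $s$ is smaller, the argument applies verbatim, because the top $s$ residuals are then a subset of the top $|\mathrm{supp}(\boldsymbol{\varepsilon})|$ residuals.

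\textbf{Step 1 (deterministic separation).} Write ${\bf e}_j := {\bf x}_j - {\bf x}^\star$ and $\Gamma := \mathrm{supp}(\boldsymbol{\varepsilon})$. Set $\varepsilon_{\min} := \min_{i\in\Gamma}|\varepsilon_i|$, which is positive since $\Gamma$ is finite. By \eqref{eq:model}, $r_i({\bf x}_j) = \langle {\bf a}_i, {\bf e}_j\rangle - \varepsilon_i$. Using \eqref{eq:row-norm} and Cauchy--Schwarz, every clean row $i\notin\Gamma$ satisfies $|r_i({\bf x}_j)| \le \|{\bf e}_j\|_2$. Every corrupted row $i\in\Gamma$ satisfies $|r_i({\bf x}_j)| \ge \varepsilon_{\min} - \|{\bf e}_j\|_2$. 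Hence, on the event $E_j := \{\|{\bf e}_j\|_2 < \varepsilon_{\min}/2\}$, every corrupted residual is strictly larger than every clean residual. The $|\Gamma|$ largest residuals are then exactly the corrupted rows, with no ties across the two groups, so any top-$s$ set with $s\le|\Gamma|$ consists only of corrupted rows.

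\textbf{Step 2 (probabilistic control).} Let $\rho<1$ denote the contraction factor of Theorem~\ref{thm:QRKrate}, assuming its hypotheses hold. Markov's inequality applied to $\|{\bf e}_j\|_2^2$ gives
\[
\mathbb{P}(E_j^c) \le \frac{4\,\mathbb{E}\|{\bf e}_j\|_2^2}{\varepsilon_{\min}^2} \le \frac{4\rho^j\|{\bf x}_0-{\bf x}^\star\|_2^2}{\varepsilon_{\min}^2}.
\]
Choosing
\[
N_1(\eta) := \left\lceil \frac{\log\bigl(4\|{\bf x}_0-{\bf x}^\star\|_2^2/(\eta\,\varepsilon_{\min}^2)\bigr)}{\log(1/\rho)}\right\rceil
\]
makes the right-hand side at most $\eta$ for every $j\ge N_1$. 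Together with Step 1, this gives the claim.

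\textbf{Main obstacle.} The delicate point is not the argument itself but its dependencies. First, $N_1$ depends on the unknown quantities $\varepsilon_{\min}$ and $\|{\bf x}_0-{\bf x}^\star\|_2$. This is acceptable because the lemma only asserts existence. Second, we need the expected-error bound of Theorem~\ref{thm:QRKrate}, or the original bound of \cite[Theorem~1.1]{haddock2020quantile}, to be in force. I would state explicitly that the lemma is proved under those hypotheses.

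If one wants a statement for the blocklist mechanism over several iterations rather than a single fixed $j$, I would strengthen Step 2 with a union bound over the window $j,\dots,j+S$. Since the bounds $\rho^{j}$ are geometrically summable, the window costs only a factor $1/(1-\rho)$ inside the logarithm defining $N_1$.
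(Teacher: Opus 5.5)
Your proof is correct, but it takes a different probabilistic route from the paper's, and it uses a weaker separation constant.

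\textbf{Separation constant.} The paper separates at $\varepsilon_{(s)}$, the $s$-th largest entry of $|\boldsymbol{\varepsilon}|$, rather than at $\varepsilon_{\min}$. If $\|{\bf e}_j\|<\varepsilon_{(s)}/2$, then at least $s$ corrupted residuals exceed $\varepsilon_{(s)}/2$ while every clean residual lies below it. So for $s<|\Gamma|$, the paper's $N_1$ depends on the $s$-th largest corruption rather than the smallest one. This matters when some corruptions are tiny, as in the layered models of the experiments. Your ``applies verbatim'' reduction is valid but throws this away.

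\textbf{Probabilistic step.} The paper does not apply Markov at a fixed $j$. It chooses $N_1$ with $\mathbb{E}\|{\bf e}_{N_1}\|^2\le\eta\,\varepsilon_{(s)}^2/4$. It then uses the one-step conditional contraction behind Theorem~\ref{thm:QRKrate} to show that $Z_\ell=\|{\bf e}_{N_1+\ell}\|^2/\rho^{\ell}$ (in your notation for $\rho$) is a nonnegative supermartingale. The maximal inequality then gives $\mathbb{P}\{\exists j\ge N_1:\ \|{\bf e}_j\|^2\ge\varepsilon_{(s)}^2/4\}\le\eta$. This is a single event of probability at least $1-\eta$ on which the top residuals are corrupted at every later iteration simultaneously. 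That is what the repeated blocking cycles of the whitelist scheme actually need, and it comes with exactly the same condition on $N_1$ as your pointwise Markov bound.

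\textbf{What your route buys.} It gives the literal pointwise statement more elementarily, with an explicit $N_1$. It also needs only the unconditional expected bound as actually stated in Theorem~\ref{thm:QRKrate}, not the conditional one-step contraction from its proof. Your union-bound remark in fact works over all $j\ge N_1$ at once, not just a window of length $S$, since $\sum_{j\ge N_1}\rho^j=\rho^{N_1}/(1-\rho)$. The price is the extra factor $1/(1-\rho)$, of order $n$ under Theorem~\ref{thm:QRKrate}, inside the logarithm defining $N_1$. The maximal inequality avoids that factor entirely.
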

 We prove the lemma using martingale techniques recently proposed in the context of the iterative solvers in \cite{anderson2025beyond}.
 \begin{proof} Let's set ${\bf e}_j={\bf x}_j -{\bf x}^\star$ and $\varepsilon_{(s)}=$ the $s$-th largest component of $|\boldsymbol{\varepsilon}|$. We have the following two claims.\\
 \textbf{Claim 1:} If $\|{\bf e}_j\| < \varepsilon_{(s)}/2$, then the equations corresponding to the top $s$ residuals are corrupted. 
Claim 1 follows from an argument similar to Lemma~1 from \cite{haddock2019randomized}).
\\
\textbf{Claim 2:} For $\eta \in (0,1),$ there exists $N_1$ so that
\begin{equation}\label{eq:claim-2}
    \mathbb{P} \left\{ \forall j > N_1: \|{\bf e}_j\|^2 < \frac{\varepsilon_{(s)}^2}{4}\right\} \ge 1 - \eta.
\end{equation}
Let's justify Claim 2. 
 From Theorem~\ref{thm:QRKrate}, there exists ${N_1}(\eta)$, such that
$\mathbb{E} \|{\bf e}_j\|^2 \le \frac{\eta \varepsilon_{(s)}^2}{4}.$
For $\ell \ge 0$, we define
 \[
    Z_\ell := \frac{\|{\bf e}_{\ell + N_1}\|^2}{(1 - \rho)^\ell}
\]
with $\rho = \frac{C_D (1 - 3\beta - \alpha)^3}{n (1 - 2\beta - \alpha)}$. Since $ \mathbb{E}_{k-1}\|{\bf e}_\ell\|^2 \le (1 - \rho)\|{\bf e}_{\ell-1}\|^2$ (see \Cref{thm:QRKrate}) $Z_k$
is a nonnegative supermartingale.
So, (e.g.,~\cite[Exercise~4.8.2]{durrett2019probability}) 
\[
    \mathbb{P}\left\{\sup_{\ell \geq 0} Z_\ell \geq \lambda\right\} \leq \frac{\mathbb{E}[Z_0]}{\lambda} = \frac{\|{\bf e}_{N_1}\|^2}{\lambda}.
\]
Set $\lambda=\frac{\varepsilon_{(s)}^2}{4}$ and use  $(1 - \rho)^{j - {N_1}} < 1$ to justify Claim 2.
\end{proof}

\noindent\textbf{WhiteList(WL)-QRK.} To convert transient residual evidence into persistent decisions, we propose the following Algorirm~\ref{alg:WLQRK}.

Here,
we maintain two disjoint sets
$\WL_j,\BL_j\subseteq[m]$ (\emph{whitelist} and \emph{blocklist}) at iteration $j$ with $\WL_j\cup\BL_j=[m]$. Within each mini-batch from $\WL_j$ we compute two quantiles:
(i) the QRK threshold $\tau_q(x)$ used to form the admissible set $R_q(x)$ in \eqref{eq:admissible}, and
(ii) a higher \emph{blocking threshold} $\tau_{\thr}(x)$ (with $\thr>q$). Rows in the mini-batch whose residuals exceed $\tau_{\thr}(x)$ accumulate \emph{block votes}.
A parameter $S$ represents a \emph{blocking cycle} size: every $S$ iterations we (a) move the most suspicious rows with many block votes from $\WL_j$ to $\BL_j$ (lines 19,22),
(b) return rows from $\BL_j$ to $\WL_j$ if their residuals fall below $\tau_q(x)$ (lines 16, 17),
and (c) adapt $q$ to the estimated  corruption on $\WL_j$ (line 23). 

\vspace{-8pt}

\begin{algorithm}[H] \caption{Whitelist QRK (WL-QRK)} \label{alg:WLQRK} \begin{algorithmic}[1] \Require ${\bf A}\in\mathbb{R}^{m\times n}$, ${\bf b}\in\mathbb{R}^m$, corruption fraction (upper bound) $\beta$, confidence gap $\alpha$, warmup iterations $N_{1}$, total iterations $N_{1} + N_{2}$, blocking cycle size $S$, block threshold quantile $\texttt{thr}$, batch size $t$, initial ${\bf x}_0$ (default $0$) \Ensure Approximate solution ${\bf x}$ \State ${\bf x}\gets {\bf x}_0$ \State Initialize lists: $\mathsf{WL}\gets [m]$ (whitelist), $\mathsf{BL}\gets \emptyset$ (blocklist) \State Initialize sampling and blocking counters: $s[i]\gets 0$, $b[i]\gets 0$ for all $i\in[m]$ \State Target quantile: $q \gets 1-\alpha-\beta$ \For{$j=1,\dots,N_{1}+N_{2}$} \State Sample batch: $i_1,\dots,i_t \sim \mathrm{Uniform}(\mathsf{WL})$ \State Residuals: $r_\ell \gets \langle {\bf a}_{i_\ell},{\bf x}\rangle - b_{i_\ell}$ for $\ell\in[t]$ \State q-RK threshold: $\tau_q \gets \mathrm{Quantile}(\{|r_\ell|\}, q)$ \State Block threshold: $\tau_{\texttt{thr}} \gets \mathrm{Quantile}(\{|r_\ell|\}, \texttt{thr})$ \State \emph{Admissible set:} $R \gets \{\, i_\ell : |r_\ell| \le \tau_q \,\}$ \State Pick index $k \sim \mathrm{Uniform}(R)$ \State Update: ${\bf x} \gets {\bf x} - (\langle {\bf a}_k,{\bf x}\rangle - b_k)\,{\bf a}_k$ \State Update counters: $s[i_\ell]\gets s[i_\ell]+1$ for all $\ell\in[t]$ \State Blocked samples: if $|r_\ell|>\tau_{\texttt{thr}}$, then $b[i_\ell]\gets b[i_\ell]+1$ \If{$j>N_{1}$ and $j \bmod S=0$} \State \emph{Return set:} $\mathcal{R} \gets \{\, i\in \mathsf{BL} : |\langle {\bf a}_i,{\bf x}\rangle-b_i|\le \tau_q \,\}$ \State Update lists: $\mathsf{BL}\gets (\mathsf{BL}\setminus \mathcal{R})$, \quad $\mathsf{WL}\gets (\mathsf{WL}\cup \mathcal{R})$ \If {$|\mathsf{BL}|< \beta m$} \State \emph{Discard set:} $\mathcal{D} \gets \{\, i\in \mathsf{WL} : s[i]\ge \tfrac{St}{|\mathsf{WL}|},\; b[i]\ge 0.9\,s[i] \,\}$ \State Reset counters: $s[i]\gets 0,\; b[i]\gets 0$ for all $i$ \EndIf \State Update lists: $\mathsf{BL}\gets (\mathsf{BL} \cup \mathcal{D}$), \quad $\mathsf{WL}\gets (\mathsf{WL} \setminus \mathcal{D}$) \State Update $q$: with $l=|\mathsf{WL}|$, $l_1=|\mathsf{BL}|$, set \[ q \gets 1-\alpha-\tfrac{\beta m - l_1}{l} \] \EndIf \EndFor \State \Return ${\bf x}$ \end{algorithmic} \end{algorithm}

As a result, after every \emph{blocking cycle}, we set aside some equations to not sample from ($\BL_j$) and make a refined estimate for the fraction of corrupted equations remaining in the whitelist, as if all blocked rows were corrupted: $\beta_{new}=\frac{\beta m-\ell_1}{m - \ell_1}$, where $m$ is the total number of rows. To finalize each blocking cycle, quantile parameter $q$ is recalculated based on $\beta_{new}$. While this update is optimistic (even though we add a confidence gap $\alpha$, see line 23), robustness is achieved through  re-introduction: previously blocked rows  return to the reliable set if they appear trustworthy, based on their relatively small residuals. This is theoretically supported by Lemma~\ref{lemma}; 
in practice we rely on the consistency of the top residuals as a heuristic indicator that stabilization has occurred. 

Sampling is restricted to $\WL_j$ for $N_1 + N_2$ iterations total.  This mechanism progressively lowers the effective corruption rate observed by QRK thus speeding it up (as confirmed by  Theorem~\ref{thm:QRKrate}).

 \vspace*{-7pt}

\section{Experiments}
\label{sec:simulation}

 \vspace*{-3pt}

\noindent\textbf{Simulations on synthetic dataset.} For the synthetic experiments, we generate a standardized Gaussian matrix $\mathbf{A}\in\mathbb{R}^{5000\times100}$ and a Gaussian vector $\mathbf{x}^\star \in \mathbb{R}^{100}$. The response is computed as $\mathbf{b} = \mathbf{A}\mathbf{x}^\star$, with $\beta = 0.4$ fraction of the corrupted rows. The indices of the corrupted rows are selected uniformly at random without replacement. To illustrate convergence under different types of corruption, we test three models:

 \vspace*{-5pt}

\begin{itemize}
\item 2-layer: uniform value $u_i \sim \mathrm{Unif}(1,5)$ is added to 1000 rows and a value $u_i \sim \mathrm{Unif}(0.01,0.05)$ to another 1000;
\item 5-layer: a uniform value $u_i \sim \mathrm{Unif}(10^{x-1},10^x)$  is added to 400 rows for each $x \in \{-2,-1,0,1,2\}$;
\item Uniform: a uniform value $u_i \sim \mathrm{Unif}(-5,5)$ is added to the corrupted rows.
\end{itemize}

 \vspace*{-7pt}

\begin{figure}[htbp]
    \centering    \includegraphics[width=0.7\linewidth]{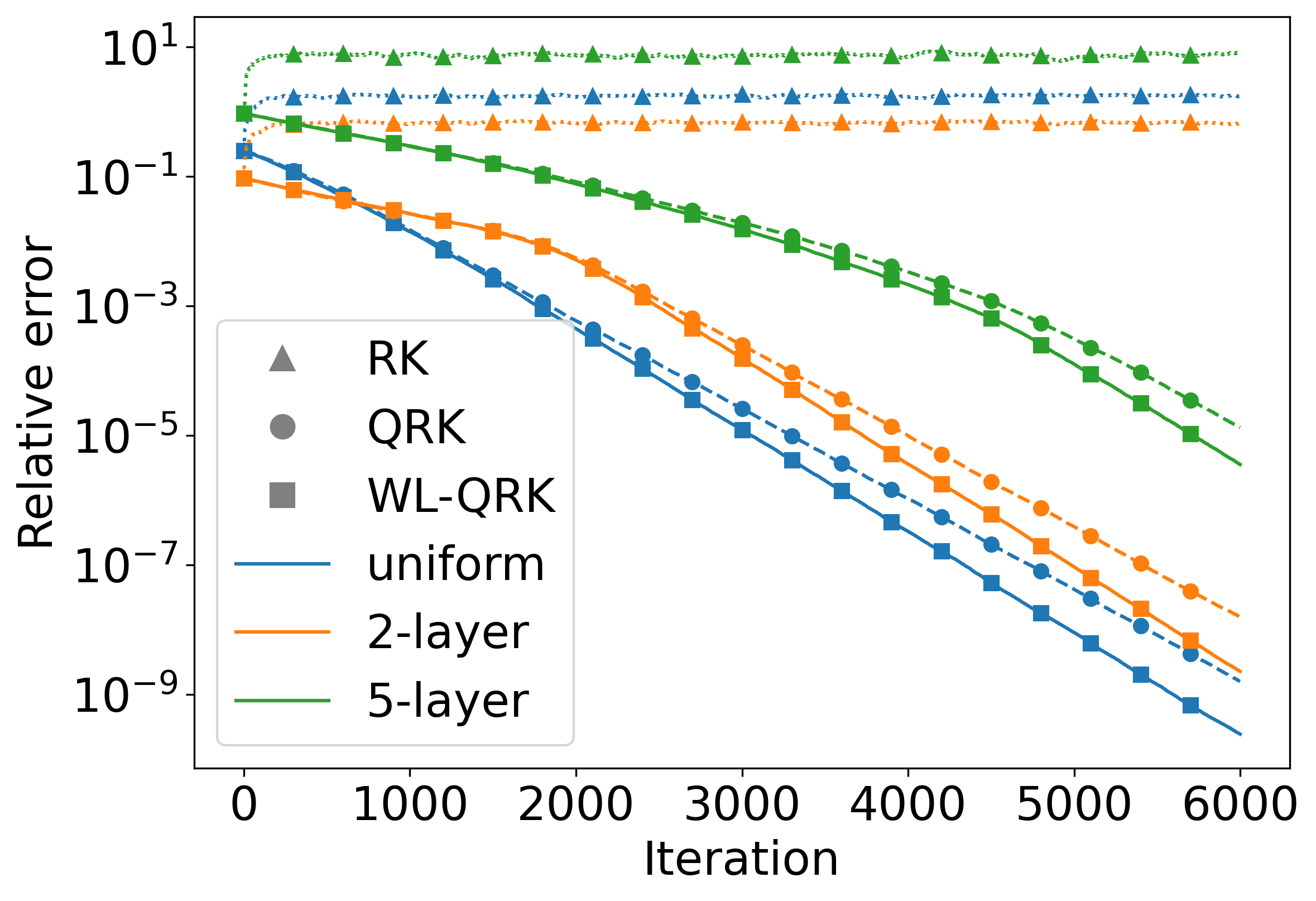}
    \caption{Subsample WL-QRK converges quicker than subsample QRK for various corruption types, RK does not converge.}
    \label{fig:art_ds_conv}
\end{figure}

We tested the algorithms both on the full sample and with subsampling of 2000 rows per iteration. Here, $N_1=100$, $N_2=6000$, and $S=100$. Additionally, the initial guess ${\bf x}_0$ is the least-squares solution of the corrupted system. The results, averaged over 10 runs, are shown in Figure~\ref{fig:art_ds_conv}. 

We also compare taking full samples per iteration with subsampling (\Cref{fig:art_ds_beta_and_fullsamp}). We report the average of the corruption rate in the whitelist $\WL_j$. The figure demonstrates that reducing the sample size from full sampling to 40\% has minimal impact on convergence,
while each iteration of subsampled WL–QRK requires less than half the computations of the full-sample version.

 \begin{figure}[htbp]
    \centering    \includegraphics[width=0.49\linewidth]{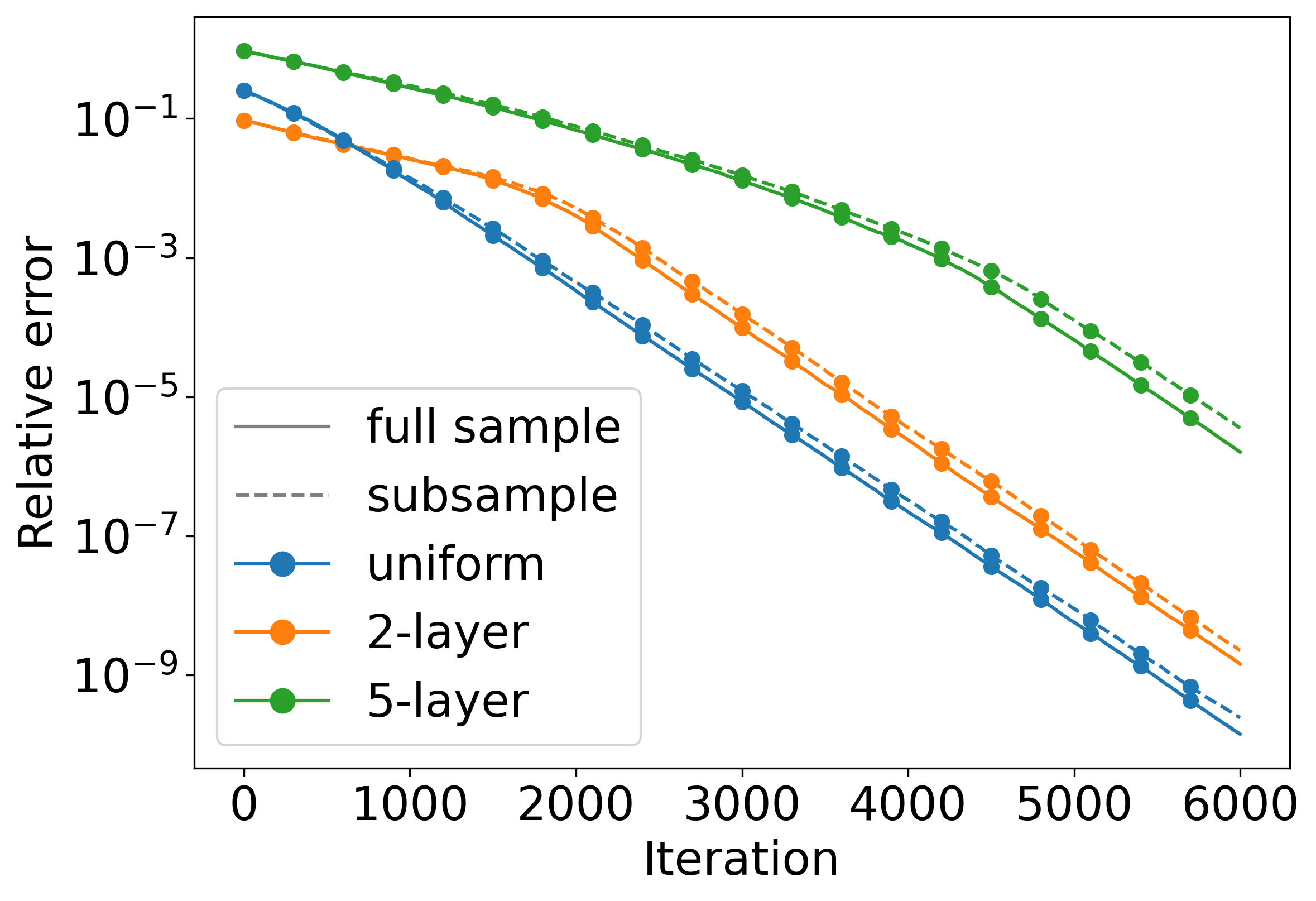}
     \includegraphics[width=0.49\linewidth]{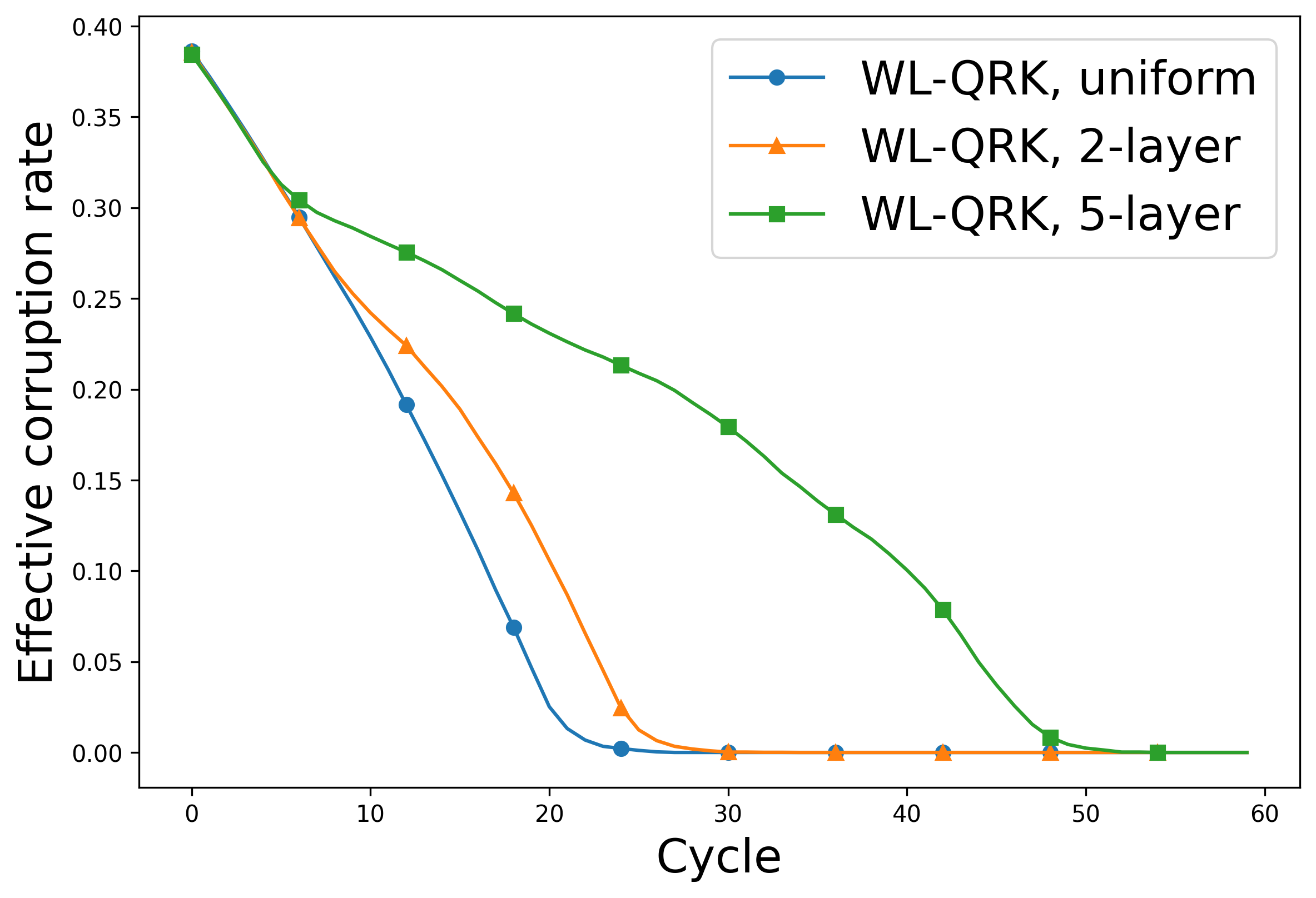} 
    \caption{Subsample WL-QRK performs similarly to the full sample WL-QRK (Left) and the effective corruption rate decreases as the number of blocking cycles grows (Right).}
    \label{fig:art_ds_beta_and_fullsamp}
\end{figure}

 \vspace*{-4pt}
    
\begin{figure}[htbp]
    \centering
    \includegraphics[width=0.49\linewidth]{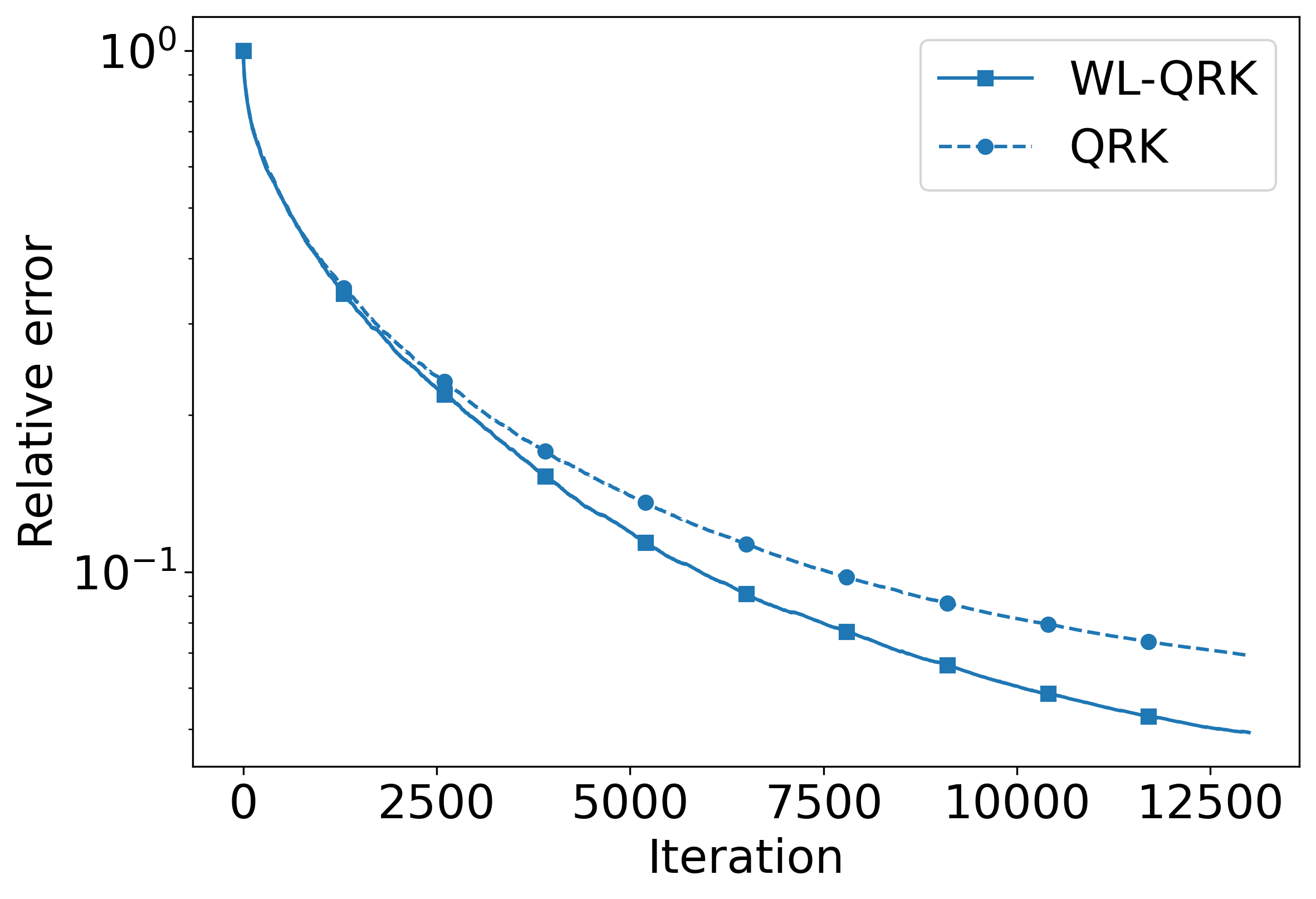} 
\includegraphics[width=0.45\linewidth]{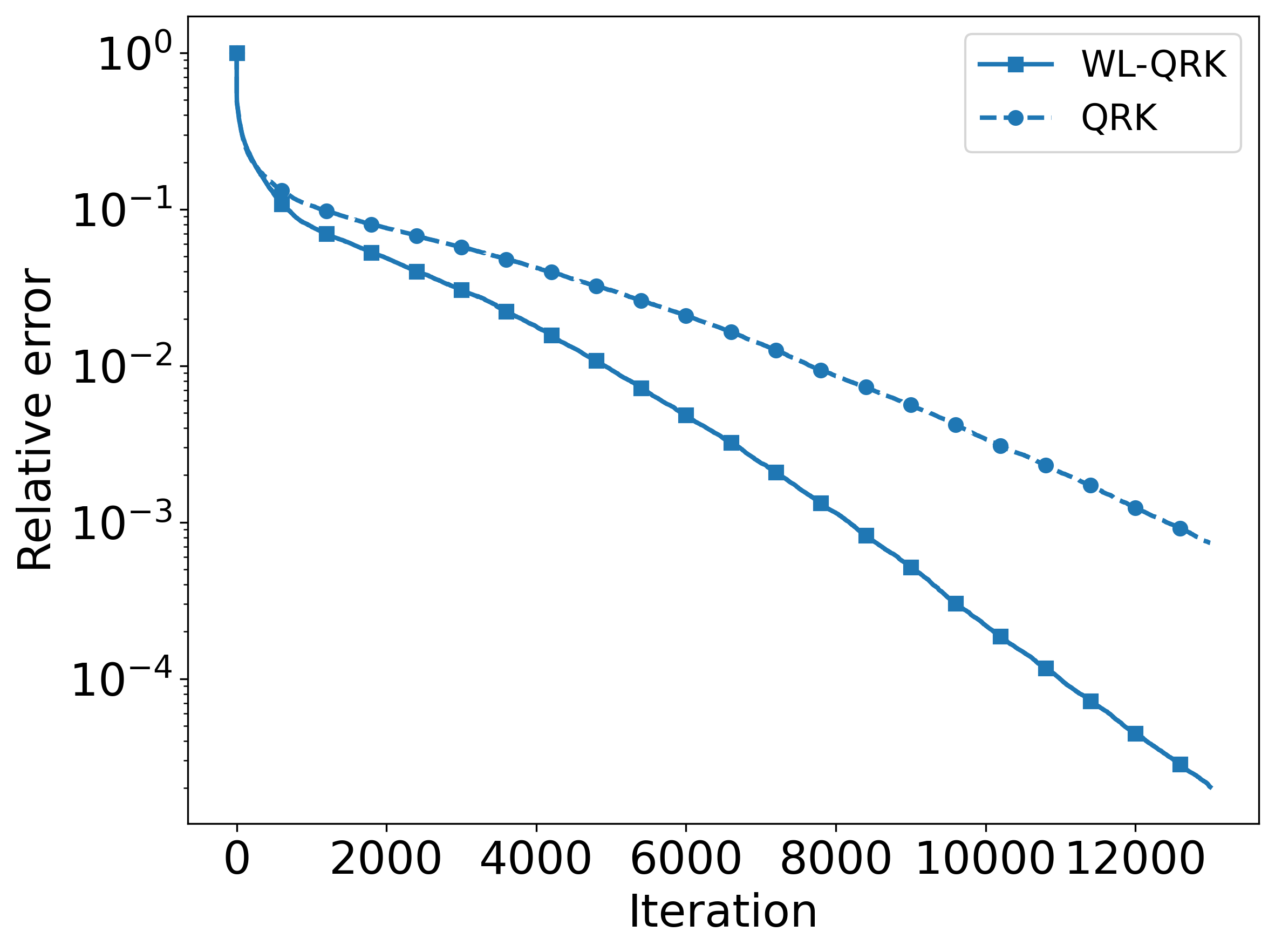} 
    \caption{Full sample WL-QRK v.s. QRK on Tomography system (Left) and WBC system (Right). }
    \label{fig:realdata_conv}
\end{figure}

 \vspace*{-9pt}

\vspace{2mm}
\noindent\textbf{Simulations on real-world dataset.}
We evaluate QRK and its whitelist variant (WL-QRK), initialized with 
${\bf x}_0 = {\bf 0}$, $N_1 = 1000$, $N_2 = 12000$ on two linear systems appearing in medical imaging problems: Wisconsin Breast Cancer (WBC) and tomography problem generated using the MATLAB Regularization Toolbox by P. C. Hansen \cite{hansen2007regularization}. In the WBC experiment, approximately 25\% of the rows
are corrupted, selected uniformly at random, by adding i.i.d. uniform noise perturbations
$u_i \sim \mathrm{Unif}(-20,20)$. In the Tomography experiment, approximately 18\% are corrupted using a 2-layer model: a uniform noise value $u_i \sim \mathrm{Unif}(1,2)$ is added to 100 rows and a uniform noise value $u_i \sim \mathrm{Unif}(40,100)$ is added to 120 rows.

In this simulation, 
each method is run $15$ times 
with independent randomness. As shown in \Cref{fig:realdata_conv}, the 
first $\sim 1$k iterations of the curves are 
nearly identical, since both methods reduce to QRK during the warm-up 
phase. Thereafter, WL-QRK outpaces QRK, demonstrating that the 
whitelist mechanism effectively suppresses the influence of corrupted 
rows---lowering the effective corruption rate---and accelerates 
convergence.

\bibliographystyle{IEEEbib}
\bibliography{refs}

\end{document}